\numberwithin{equation}{section}
\newtheorem{Theorem}{Theorem}[section]
\newtheorem{Problem}[Theorem]{Open Problem}
{\theoremstyle{definition}
\newtheorem{Definition}[Theorem]{Definition}
\newtheorem{Example}[Theorem]{Example}
\newtheorem{Remark}[Theorem]{Remark}
}
\def\PI{P$_{\rm I}$}
\def\PII{P$_{\rm II}$}
\def\PIII{P$_{\rm III}$}
\def\PIV{P$_{\rm IV}$}
\def\PV{P$_{\rm V}$}
\def\PVI{P$_{\rm VI}$}
\def\Ptf{$\mbox{\rm P}_{34}$}
\def\HII{\text{H$_{\rm II}$}}
\def\sPI{\text{S$_{\rm I}$}}
\def\sPVI{\text{S$_{\rm VI}$}}
\def\a{\alpha}
\def\b{\beta}
\def\ga{\gamma}
\def\de{\delta}
\def\k{\kappa}
\def\la{\lambda}
\def\vth{\vartheta}
\def\ph{\varphi}
\def\ds{\displaystyle}
\def\Ai{\mathop{\rm Ai}\nolimits}
\def\Bi{\mathop{\rm Bi}\nolimits}
\def\sgn{\mathop{\rm sgn}\nolimits}
\def\O{\mathcal{O}}
\def\C{\mathbb{C}}
\def\R{\mathbb{R}}
\def\d{{\rm d}}
\def\e{{\rm e}}
\def\i{{\rm i}}
\newcommand{\deriv}[3][]{\frac{\d^{#1}{#2}}{{\d{#3}}^{#1}}}
\newcommand{\ideriv}[3][]{{\d^{#1}{#2}}/{{\d{#3}}^{#1}}}
\newcommand{\pderiv}[3][]{\frac{\partial^{#1}{#2}}{{\partial{#3}}^{#1}}}
\begin{document}

\allowdisplaybreaks

\renewcommand{\thefootnote}{}

\renewcommand{\PaperNumber}{006}

\FirstPageHeading

\ShortArticleName{Open Problems for Painlev\'e Equations}

\ArticleName{Open Problems for Painlev\'e Equations\footnote{This paper is a~contribution to the Special Issue on Orthogonal Polynomials, Special Functions and Applications (OPSFA14). The full collection is available at \href{https://www.emis.de/journals/SIGMA/OPSFA2017.html}{https://www.emis.de/journals/SIGMA/OPSFA2017.html}}}

\Author{Peter A.~CLARKSON}

\AuthorNameForHeading{P.A.~Clarkson}

\Address{School of Mathematics, Statistics and Actuarial Science, University of Kent,\\ Canterbury, Kent, CT2 7FS, UK}
\Email{\href{mailto:P.A.Clarkson@kent.ac.uk}{P.A.Clarkson@kent.ac.uk}}
\URLaddress{\url{https://www.kent.ac.uk/smsas/our-people/profiles/clarkson_peter.html}}

\ArticleDates{Received January 18, 2019; Published online January 29, 2019}

\Abstract{In this paper some open problems for Painlev\'e equations are discussed. In particular the following open problems are described: (i)~the Painlev\'e equivalence problem; (ii)~notation for solutions of the Painlev\'e equations; (iii)~numerical solution of Painlev\'e equations; and (iv)~the classification of properties of Painlev\'e equations.}

\Keywords{Painlev\'e equations; open problems}

\Classification{33E17; 34M55}

\renewcommand{\thefootnote}{\arabic{footnote}}
\setcounter{footnote}{0}

\section{Introduction}

The Painlev\'e equations are now regarded as ``nonlinear special functions'', being nonlinear analogs of the classical special functions and form the core of ``modern special function theo\-ry'' \cite{refPAC05review,refFIKN,refGLS,refUmemura98}. Indeed Iwasaki, Kimura, Shimomura and Yoshida \cite{refIKSY} characterize the Painlev\'e equations as ``the most important nonlinear ordinary differential equations" and state that ``many specialists believe that during the twenty-first century the Painlev\'e functions will become new members of the community of special functions". Subsequently this has happened as the Painlev\'e equations are a chapter in the NIST \textit{Digital Library of Mathematical Functions} \cite[Section~32]{refDLMF}. The Painlev\'e functions have greatly expanded the role that the classical special functions, such as the Airy, Bessel, Hermite, Legendre and hypergeometric functions, started to play in the 19th century. Increasingly, as nonlinear science develops, people are finding that the solutions to an extraordinarily broad array of scientific problems, from neutron scattering theory, special solutions of partial differential equations such as nonlinear wave equations, fibre optics, transportation problems, combinatorics, random matrices, quantum gravity and to number theory, can be expressed in terms of solutions of the Painlev\'e equations.

The Painlev\'e equations (\PI--\PVI), whose solutions are called the Painlev\'e transcendents, are the nonlinear ordinary differential equations given by
\begin{gather}
\deriv[2]{w}{z} = 6w^2+z,\label{eqPI}\\
\deriv[2]{w}{z} = 2w^3+zw+\alpha,\label{eqPII}\\
\deriv[2]{w}{z} = \frac{1}{w}{\left(\deriv{w}{z}\right)^{\!2}} - \frac{1}{z} \deriv{w}{z} +\frac{\a w^2 + \b}{z} + \ga w^3 + \frac{\de}{w},\label{eqPIII}\\
\deriv[2]{w}{z} =\frac{1}{2w}\left(\deriv{w}{z}\right)^{\!2} + \frac{3}{2}w^3 + 4z w^2 + 2(z^2 - \a)w + \frac{\b}{w},\label{eqPIV} \\
\deriv[2]{w}{z} = \left(\frac{1}{2w} + \frac{1}{w-1}\right){\left(\deriv{w}{z}\right)^{\!2}} - \frac{1}{z} \deriv{w}{z} + \frac{(w-1)^2}{z^2}\left(\a w +\frac{\b}{w}\right)+ \frac{\ga w}{z} + \frac{\de w(w+1)}{w-1},\label{eqPV} \\
\deriv[2]{w}{z} = \frac{1}{2}\left(\frac{1}{w} + \frac{1}{w-1} + \frac{1}{w-z}\right)\!\left(\deriv{w}{z}\right)^{\!2}
- \left(\frac{1}{z} + \frac{1}{z-1} + \frac{1}{w-z}\right) \!\deriv{w}{z} \nonumber \\
\hphantom{\deriv[2]{w}{z} =}{}+ \frac{w(w-1)(w-z)}{z^2(z-1)^2}\left\{\a + \frac{\b z}{w^2} + \frac{\ga (z-1)}{(w-1)^2} + \frac{\de z(z-1)}{(w-z)^2}\right\},\label{eqPVI}
\end{gather} where $\a$, $\b$, $\ga$ and $\de$ are arbitrary constants.
These six equations have attracted much attention for mathematicians and physicists during the past 40 years or so, though they were discovered by Painlev\'e, Gambier et al.\ in the late 19th and early 20th centuries, in an investigation of which second-order ordinary differential equations of the form
\begin{gather} \label{eq:gen-ode} \deriv[2]{w}{z}=F\left(w,\deriv{w}{z},z\right), \end{gather}
where $F$ is rational in $w$ and $\ideriv{w}{z}$ and locally analytic in $z$, having the property that their solutions have no movable branch points. They showed that there were fifty canonical equations of the form \eqref{eq:gen-ode} with this property, now known as the \textit{Painlev\'e property}, up to a M\"obius (bilinear rational) transformation
\begin{gather} \label{eq:mobius}
W(\zeta) = \frac{a(z)w + b(z)}{c(z)w + d(z)}, \qquad \zeta=\phi(z),
\end{gather} where $a(z)$, $b(z)$, $c(z)$, $d(z)$ and $\phi(z)$ are
locally analytic functions. Further Painlev\'e, Gambier et al.\ showed that of these fifty equations, forty-four can be reduced to linear equations, solved in terms of elliptic functions, or are reducible to one of six new nonlinear ordinary differential equations that define new transcendental functions, see Ince \cite[Chapter~14]{refInce}.

Following Sakai \cite{refSakai01} and Ohyama et al.\ \cite{refOKSO} (see also \cite{refOK06}), \PIII\ \eqref{eqPIII} can be classified into four cases:
\begin{enumerate}[(i)]\itemsep=0pt
\item if $\ga\de\not=0$, which is known as $\mbox{P}^{(6)}_{\rm III}$, then set $\ga=1$ and $\de=-1$, without loss of generality, by rescaling $w$ and $z$ if necessary
\begin{gather}\label{eq:PIII6}\deriv[2]{w}{z} = \frac1w \left(\deriv{w}{z}\right)^{2}- \frac{1}{z} \deriv{w}{z} +\frac{\a w^2}{z} + \frac{\b}{z} + w^3 - \frac{1}{w};\end{gather}
\item if $\ga=0$ and $\a\de\not=0$ (or equivalently $\de=0$ and $\b\ga\not=0$), which is known as $\mbox{P}^{(7)}_{\rm III}$, then set $\a=1$ and $\de=-1$, without loss of generality
\begin{gather}\label{eq:PIII7}\deriv[2]{w}{z} = \frac1w \left(\deriv{w}{z}\right)^{2}- \frac{1}{z} \deriv{w}{z} +\frac{w^2}{z} + \frac{\b}{z} - \frac{1}{w},\end{gather}
or if $\de=0$ and $\b\ga\not=0$ set $\b=-1$ and $\ga=1$
\begin{gather}\label{eq:PIII7b}\deriv[2]{w}{z} = \frac1w \left(\deriv{w}{z}\right)^{2}- \frac{1}{z} \deriv{w}{z} +\frac{\a w^2}{z} - \frac{1}{z} +w^3;\end{gather}
\item if $\ga=\de=0$ and $\a\b\not=0$, which is known as $\mbox{P}^{(8)}_{\rm III}$, then set $\a=1$ and $\b=-1$, without loss of generality
\begin{gather*}
\deriv[2]{w}{z} = \frac1w \left(\deriv{w}{z}\right)^{2}- \frac{1}{z} \deriv{w}{z} +\frac{w^2}{z} - \frac{1}{z};\end{gather*}
\item if $\a=\ga=0$ (or equivalently $\b=\de=0$) then the equation can be solved by quadratures and has no transcendental solutions.
\end{enumerate}
In the sequel, we shall refer to equation \eqref{eq:PIII6} as \PIII\ rather than $\mbox{P}^{(6)}_{\rm III}$ since this is the generic case. Equation \eqref{eq:PIII7} is also known as the degenerate \PIII, cf.~\cite{refKV04,refKV10}. These different types of~\PIII\ were noted by Painlev\'e \cite{refPain1898}.

Similarly, \PV\ \eqref{eqPV} can be classified into three cases:
\begin{enumerate}[(i)]\itemsep=0pt
\item if $\de\not=0$, then set $\de=-\tfrac12$, without loss of generality;
\item if $\de=0$ and $\ga\not=0$, then the equation is known as degenerate \PV\ (deg-\PV)
\begin{gather}\deriv[2]{w}{z} = \left(\frac{1}{2w} + \frac{1}{w-1}\right){\left(\deriv{w}{z}\right)^{\!2}} - \frac{1}{z} \deriv{w}{z} + \frac{(w-1)^2}{z^2}\left(\a w +\frac{\b}{w}\right)+ \frac{\ga w}{z},\label{eqPV0}\end{gather}
which is equivalent to \PIII\ \eqref{eq:PIII6}, cf.~\cite[Theorem~4.2]{refFA82}, \cite[Section~34]{refGLS};
\item if $\ga=0$ and $\de=0$ then the equation can be solved by quadratures and has no transcendental solutions.
\end{enumerate}

Each of the Painlev\'e equations can be written as a Hamiltonian system
\begin{gather*}
\frac{\d q}{\d z}=\pderiv{\mathcal{H}_{\rm J}}{p},\qquad \frac{\d p}{\d z}=-\pderiv{\mathcal{H}_{\rm J}}{q},
\end{gather*}
for a suitable Hamiltonian function $\mathcal{H}_{\rm J}(q,p,z)$ \cite{refJMii,refOkamoto80a,refOkamoto80b}. The function $\sigma(z)\equiv\mathcal{H}_{\rm J}(q,p,z)$ satisfies a second-order, second-degree ordinary differential equation, known as the ``Painlev\'e $\sigma$-equation", whose solution is expressible in terms of the solution of the associated Painlev\'e equation \cite{refJMii,refOkamoto80b}. The Painlev\'e $\sigma$-equations (\sPI--\sPVI) associated with \PI--\PVI\ respectively are
\begin{gather}
\left(\deriv[2]{\sigma}{z}\right)^{\!2} + 4\left(\deriv{\sigma}{z}\right)^{\!3} +2z\deriv{\sigma}{z}-2\sigma=0,\label{eqSI}\\
\left(\deriv[2]{\sigma}{z}\right)^{\!2} + 4\left(\deriv{\sigma}{z}\right)^{\!3}
+2\deriv{\sigma}{z}\left(z\deriv{\sigma}{z}-\sigma\right)=\tfrac14{\b^2},\label{eqSII}\\
\left(z\deriv[2]{\sigma}{z}-\deriv{\sigma}{z}\right)^{\!2} + \left[4\left(\deriv{\sigma}{z}\right)^{\!2}-z^2\right]\left(z\deriv{\sigma}{z}-2\sigma\right)
+4z\vth_{\infty}\deriv{\sigma}{z}=2\vth_0z^2,\label{eqSIII} \\
\left(\deriv[2]{\sigma}{z}\right)^{\!2} - 4\left(z\deriv{\sigma}{z}-\sigma\right)^{\!2}+4\deriv{\sigma}{z}\left(\deriv{\sigma}{z}+2\vth_0\right)\left(\deriv{\sigma}{z}+2\vth_{\infty}\right)=0,\label{eqSIV} \\
\left(z\deriv[2]{\sigma}{z}\right)^{\!2} - \left[2\left(\deriv{\sigma}{z}\right)^{\!2}-z\deriv{\sigma}{z}+\sigma\right]^2+4\prod_{j=1}^4\left(\deriv{\sigma}{z}+\k_j\right)=0,\label{eqSV}\\
\deriv{\sigma}{z} \left[z(z-1)\deriv[2]{\sigma}{z}\right]^2 + \left[\deriv{\sigma}{z}\left\{2\sigma-(2z-1)\deriv{\sigma}{z}\right\}
+\k_1\k_2\k_3\k_4\right]^2 =\prod_{j=1}^4\left(\deriv{\sigma}{z}+\k_j^2\right),\label{eqSVI}
\end{gather}
where $\b$, $\vth_0$, $\vth_{\infty}$ and $\k_1,\ldots,\k_4$ are arbitrary constants.

\begin{Example}The Hamiltonian associated with \PII\ (\ref{eqPII}) is
\begin{gather}\label{sec:PT.HM.DE4}
\HII(q,p,z;\a) = \tfrac12 p^2 - \big(q^2+\tfrac12 z\big)p - (\a+\tfrac12)q
\end{gather} and so
\begin{gather}\label{sec:PT.HM.DE3}
\deriv{q}{z}=p-q^2-\tfrac12z,\qquad \deriv{p}{z}=2qp+\a+\tfrac12,
\end{gather} see \cite{refJMii,refOkamotoPIIPIV}.
Eliminating $p$ in (\ref{sec:PT.HM.DE3}) then $q$ satisfies \PII\ (\ref{eqPII}) whilst eliminating $q$ yields
\begin{gather}\label{eq:P34}
\deriv[2]{p}{z} =\frac1{2p}\left(\deriv{p}{z}\right)^{\!2} + 2p^2-zp-\frac{(\a+\tfrac12)^2}{2p},
\end{gather}
which is known as \Ptf\ since is equivalent to equation XXXIV of Chapter 14 in \cite{refInce}. Hence if~$q$ satisfies \PII\ (\ref{eqPII}) then $p=q'+q^2+\tfrac12 z$ satisfies~(\ref{eq:P34}). Conversely if $p$ satisfies (\ref{eq:P34}) then $q=\big(p'-\a-\tfrac12\big)/(2p)$ satisfies \PII\ (\ref{eqPII}). Thus there is a one-to-one correspondence between solutions of \PII\ (\ref{eqPII}) and those of \Ptf\ (\ref{eq:P34}). Further, the function $\sigma(z;\a)=\HII(q,p,z;\a)$ defined by (\ref{sec:PT.HM.DE4}), where $q$ and $p$ satisfy the system (\ref{sec:PT.HM.DE3}), then $\sigma(z;\a)$ satisfies (\ref{eqSII}). Conversely if $\sigma(z;\a)$ is a solution of (\ref{eqSII}), then
\begin{gather*}
q(z;\a)=\frac{4\sigma''(z;\a)+2\a+1}{8\sigma'(z;\a)},\qquad p(z;\a)=-2\sigma'(z;\a),
\end{gather*}
with $'\equiv {\rm d}/{\rm d}z$, are solutions of (\ref{eqPII}) and (\ref{eq:P34}), respectively \cite{refJMii,refOkamoto80a,refOkamoto80b,refOkamotoPIIPIV}.
\end{Example}

In this paper some open problems associated with the Painlev\'e equations are discussed. Specifically the following open problems are discussed.
\begin{enumerate}\itemsep=0pt
\item Develop algorithmic procedures for the Painlev\'e equivalence problem: given an equation with the Painlev\'e property, how do we know if the equation can be solved in terms of a~Painlev\'e equation (or a Painlev\'e $\sigma$-equation)?
\item Develop software for numerically studying the Painlev\'e equations which utilizes the fact that they are integrable equations solvable using isomonodromy methods.
\item Develop a notation for the Painlev\'e transcendents which takes into account the wide variety of solutions the Painlev\'e equations have.
\item Provide a complete classification and unified structure of the special properties which the Painlev\'e equations possess~-- the presently known results are rather fragmentary and non-systematic.
\end{enumerate}

\section{Painlev\'e equivalence problem}
For a \textit{linear} ordinary differential equation, if it can be solved in terms of known functions then the equation is regarded as being is solved. Symbolic software such as MAPLE can easily find the solutions of the linear ordinary differential equations, as illustrated in the following example. \begin{Example}{\rm Consider the linear equations
\begin{gather*}
\deriv[2]{v}{z}+ z^2v=0,\qquad\deriv[2]{w}{z} + \e^{2z}w=0,\end{gather*}
which respectively have the solutions
\begin{gather*}
v(z)=\sqrt{z}\left\{C_1J_{1/4}\left(\tfrac12z^2\right)+C_2J_{-1/4}\left(\tfrac12z^2\right)\right\},\qquad
w(z)=C_1J_{0}(\e^z)+C_2Y_{0}(\e^z),\end{gather*}
with $C_1$ and $C_2$ arbitrary constants, $J_{\nu}(\zeta)$ and $Y_{\nu}(\zeta)$ {Bessel functions}.}\end{Example}

It is a general property of linear ordinary differential equations that all singularities of their solutions are fixed. For example, solutions of the second-order equation
\begin{gather*}\deriv[2]{w}{z} + p(z)\deriv{w}{z} +q(z)w = 0,\end{gather*}
can only have singularities where the coefficients do, namely at the singularities of $p(z)$ and $q(z)$.

\begin{Definition}A \textit{fixed singular point} of a solution of an ordinary differential equation is a~singular point whose location does not vary with the particular solution chosen but depends only on the equation.
\end{Definition}

However it is not as simple for \textit{nonlinear} ordinary differential equations which are quite different since, in general, their solutions can have both movable and fixed singularities.

\begin{Definition} A \textit{movable singular point} of a solution of an ordinary differential equation is one whose location
depends on the constant(s) of integration. \end{Definition}

Currently there is no symbolic software available even to identify a nonlinear ordinary differential equation let alone find a solution, except for a few very simple examples. It is quite straightforward to determine whether a given (nonlinear) ordinary differential equation has the Painlev\'e property, e.g., using the Painlev\'e test \cite{refARSi,refARSii}; see also \cite{refAC,refCM08,refKC,refKJH}.

Painlev\'e, Gambier et al.\ classified all ordinary differential equations of the form \eqref{eq:gen-ode} with the Painlev\'e property, up to a M\"obius transformation \eqref{eq:mobius}. Consequently, a given equation of the form \eqref{eq:gen-ode} with the Painlev\'e property which is \textit{not} in the list of fifty equations given by Ince \cite[Chapter~14]{refInce}, how does one determine the M\"obius transformation? If the equation is autonomous, or has a symmetry, then it has a first integral and one should be able to solve it in terms of elliptic equations, linear equations or by quadratures. If the equation is non-autonomous and does not possess a symmetry then it is likely to be solvable in terms of a Painlev\'e transcendent. The question is then to which one of the Painlev\'e equations \eqref{eqPI}--\eqref{eqPVI} is the equation solvable in terms of?

We note that the solutions of some of the equations in the list given by Ince \cite[Chapter~14]{refInce} are solved in terms of Painlev\'e transcendents. For example, equation XX in the list, namely
\begin{gather*} \deriv[2]{u}{z}=\frac{1}{2u}\left(\deriv{u}{z}\right)^2 + 4u^2+zu,
\end{gather*}
is solvable in terms of \PII\ since letting $u(z)=\sqrt{w(z)}$ yields \eqref{eqPII} with $\a=0$.

\begin{Example}Consider the equation
\begin{gather}\label{Tzode}
\deriv[2]{w}{z} = \frac1w\left(\deriv{w}{z}\right)^2-\frac1z \deriv{w}{z} + w^3-1.\end{gather}
This equation can be shown to possess the Painlev\'e property, but is not in the list of fifty equations given in \cite[Chapter 14]{refInce}. Equation \eqref{Tzode} arises from the symmetry reduction
\begin{gather*} u(x,t)=\ln w(z),\qquad z=2\sqrt{xt},\end{gather*}
of the Tzitz\'{e}ica equation \cite{refTzitzeica1,refTzitzeica2,refTzitzeica3}
\begin{gather*} u_{xt}=\e^{2u}-\e^{-u},
\end{gather*}
see also \cite{refDB77,refMik79,refMik81,refZS}. Making the transformation
\begin{gather}\label{eq2}w(z)=x^{1/3}y(x),\qquad z=\tfrac32 x^{2/3},\end{gather}
in equation \eqref{Tzode} yields
\begin{gather}\label{eq3}\deriv[2]{y}{x} =\frac1y\left(\deriv{y}{x}\right)^2 -\frac1x \deriv{y}{x} + y^3-\frac1x,\end{gather}
which is the special case of $\mbox{P}^{(7)}_{\rm III}$ \eqref{eq:PIII7b} with $\a=0$. The transformation \eqref{eq2} is suggested by the asymptotic expansions of \eqref{Tzode} and \eqref{eq3}
\begin{alignat*}{3}
& w(z)\sim 1+\la z^{-1/2}\exp\big({-}\sqrt{3} z\big),\qquad && \text{as}\quad z\to\infty,& \\
& y(x)\sim x^{-1/3}+\k x^{-2/3}\exp\left(-\tfrac32\sqrt{3} x^{2/3}\right),\qquad & &\text{as}\quad x\to\infty,&
\end{alignat*}
with $\la$ and $\k$ constants. Consequently one can derive the isomonodromy problem for equation~\eqref{Tzode} from that of equation~\eqref{eq3}.
\end{Example}

\begin{Example}Consider the complex sine-Gordon equation
\begin{gather}\nabla^2\psi + \frac{(\nabla\psi)^2\overline{\psi}}{1-|\psi|^2}+\psi\left(1-|\psi|^2\right)=0,\label{eq:csg1}\end{gather}
where $\nabla\psi=(\psi_x,\psi_y)$, which is also known as the Pohlmeyer--Lund--Regge model \cite{refLund77,refLR,refPohl}. This has a separable solution in polar coordinates given by
$\psi(r,\theta)=\ph_n(r) \e^{\i n\theta}$, where $\ph_n(r)$ satisfies the second-order equation
\begin{gather}\deriv[2]{\ph_n}{r}+\frac1r \deriv{\ph_n}{r}+ \frac{\ph_n}{1-\ph_n^2}\left\{\left(\deriv{\ph_n}{r}\right)^2-\frac{n^2}{r^2}\right\}+\ph_n\big(1-\ph_n^2\big)=0,\label{eq:csg2}\end{gather}
which also arises in extended quantum systems \cite{refCFH05,refCH05,refCH08}, in relativity \cite{refGMS} and reflection coefficients for orthogonal polynomials on the unit circle \cite[equation (3.13)]{refWVAbk}.
Equation \eqref{eq:csg2} can be shown to possess the Painlev\'e property, though is not in the list of 50 equations given in \cite[Chapter 14]{refInce}.
Equation \eqref{eq:csg2} can be transformed into \PV\ \eqref{eqPV} in two different ways:
\begin{enumerate}[(i)]\itemsep=0pt
\item the transformation \begin{gather*}\ph_n(r)=\frac{1+u(z)}{1-u(z)},\qquad\text{with}\quad r=\tfrac12z,\end{gather*}
yields
\begin{gather*}\deriv[2]{u}z= \left(\frac{1}{2u} + \frac{1}{u-1}\right)\left(\deriv{u}z\right)^{2} -\frac{1}{z} \deriv{u}z+\frac{n^2(u-1)^2(u^2-1)}{8z^2u} -\frac{u(u+1)}{2(u-1)},\end{gather*}
which is \PV\ \eqref{eqPV} with $\a=\tfrac18n^2$, $\b=-\tfrac18n^2$, $\ga=0$ and $\de=-\tfrac12$;
\item
the transformation
\begin{gather*}\ds\ph_n(r)=\sqrt{\frac{v(z)}{v(z)-1}},\qquad\text{with}\quad r=\sqrt{z},\end{gather*}
yields \begin{gather*}\deriv[2]{v}{z}=\left(\frac1{2v}+\frac1{v-1}\right)\left(\deriv{v}{z}\right)^2-\frac1z \deriv{v}{z}+\frac{n^2v(v-1)^2}{2z^2}-\frac{v}{2z},\end{gather*}
which is \PV\ \eqref{eqPV} with $\a=\tfrac12n^2$, $\b=0$, $\ga=-\tfrac12$ and $\de=0$, i.e., deg-\PV\ \eqref{eqPV0}.
\end{enumerate}
It is known that deg-\PV\ \eqref{eqPV0} is equivalent to \PIII\ \eqref{eq:PIII6}, cf.~\cite[Section~34]{refGLS}. Using this it can be shown that if $w(z)$ satisfies
\begin{gather*}\deriv[2]{w}{z} = \frac{1}{w}{\left(\deriv{w}{z}\right)^{\!2}} - \frac{1}{z} \deriv{w}{z} -\frac{2n w^2}{z} + \frac{2n+2}{z} + \ga w^3 + \frac{\de}{w},\end{gather*}
which is \PIII\ \eqref{eq:PIII6} with $\a=-2n$ and $\b=2n+2$, then
\begin{gather*} \ph_n(r) = \frac{ \sqrt{-zw'(z)+zw^2(z)+(2n+1)w(z)+z}}{\sqrt{2z} w(z)},\qquad\text{with}\quad r=z,\end{gather*}
satisfies \eqref{eq:csg2}. Consequently solutions of equation \eqref{eq:csg2} can be expressed in terms of solutions of both \PIII\ \eqref{eqPIII} and \PV\ \eqref{eqPV}.
\end{Example}

The function $\ph_{n}(r)$ also satisfies the differential-difference equations
\begin{subequations}\label{eq2ab}\begin{gather}
\deriv{\ph_{n}}{r}+\frac{n}{r}\ph_{n}-\big(1-\ph_{n}^2\big)\ph_{n-1}=0,\label{eq2a}\\
\deriv{\ph_{n-1}}{r}-\frac{n-1}{r}\ph_{n-1}+\big(1-\ph_{n-1}^2\big)\ph_{n}=0.\label{eq2b}
\end{gather}\end{subequations}
Solving \eqref{eq2a} for $\ph_{n-1}(r)$ and substituting in \eqref{eq2b} yields equation \eqref{eq:csg2}, whilst eliminating the derivatives in \eqref{eq2ab}, after letting $n\to n+1$ in \eqref{eq2b}, yields the difference equation
\begin{gather}\ph_{n+1}+\ph_{n-1}={\frac{2n}{r} \frac{\ph_{n}}{1-\ph_{n}^2}},\label{csgeq3}
\end{gather}
which is known as the discrete Painlev\'e II equation \cite{refNP,refPS,refWVAbk}. If $n=1$ then equa\-tions~\eqref{eq2ab} have the solution
\begin{gather*}
\ph_0(r)=1,\qquad\ph_1(r)=\frac{C_1I_1(r)-C_2K_1(r)}{C_1I_0(r)+C_2K_0(r)},
\end{gather*} where $I_0(r)$, $K_0(r)$, $I_1(r)$ and $K_1(r)$ are the imaginary Bessel functions and $C_1$ and $C_2$ are arbitrary constants. Then one can use \eqref{csgeq3} to determine $\ph_{n}(r)$, for $n=2,3,\ldots$. Using this Barashenkov and Pelinovsky \cite{refBP98} derive explicit multi-vortex solutions for the complex sine-Gordon equation~\eqref{eq:csg1}.

The relationship between solutions of \eqref{eq:csg2} and those of \PIII\ \eqref{eq:PIII6}, is illustrated in the following theorem.

\begin{Theorem}\label{thm:4.1} If $\ph_{n}(r)$ satisfies \eqref{eq:csg2} then
$\ds w_n(r)={\ph_{n+1}(r)}/{\ph_{n}(r)}$ satisfies
\begin{gather*}\deriv[2]{w_n}{r} = \frac{1}{w_n}\left(\deriv{w_n}{r}\right)^2-\frac{1}{r}\deriv{w_n}{r} -\frac{2n}{r}w_n^2+ \frac{2n+2}{r} + w_n^3-\frac{1}{w_n},
\end{gather*}
which is {\PIII} \eqref{eqPIII} with parameters $\a=-2n$ and $\b=2n+2$.\end{Theorem}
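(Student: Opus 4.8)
The plan is to avoid the second-order equation \eqref{eq:csg2} altogether and work only with the first-order differential-difference relations \eqref{eq2ab}, which serve as a B\"acklund-type link between neighbouring $\varphi_n$. First I would replace $n$ by $n+1$ in both \eqref{eq2a} and \eqref{eq2b} to obtain a closed first-order system for the pair $(\varphi_n,\varphi_{n+1})$,
\begin{gather*}
\varphi_{n+1}' = \big(1-\varphi_{n+1}^2\big)\varphi_n - \frac{n+1}{r}\varphi_{n+1},\qquad
\varphi_n' = \frac{n}{r}\varphi_n - \big(1-\varphi_n^2\big)\varphi_{n+1},
\end{gather*}
where $'\equiv\d/\d r$. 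These two relations are the only input needed, since the ratio $w_n=\varphi_{n+1}/\varphi_n$ can be differentiated directly.

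Next I would compute $w_n'$ by the quotient rule and substitute the two relations above. Writing $\varphi_n=\varphi_{n+1}/w_n$ to eliminate $\varphi_n$, the cross-terms collapse and one is left with a Riccati-type identity
\begin{gather*}
w_n' = 1 + w_n^2 - \frac{2n+1}{r}\,w_n - 2\varphi_{n+1}^2,
\end{gather*}
which I would then solve for $\varphi_{n+1}^2$, expressing it rationally in terms of $w_n$, $w_n'$ and $r$. This is the key reduction: it converts the search for a closed equation into pure elimination, because $\varphi_{n+1}^2$ becomes a known function of the jet $(w_n,w_n')$.

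I would then differentiate the Riccati identity once more to obtain $w_n''$. This introduces only the single new quantity $\varphi_{n+1}\varphi_{n+1}'$, which I would rewrite with the first relation above and again use $\varphi_n=\varphi_{n+1}/w_n$, so that $w_n''$ is expressed through $w_n$, $w_n'$, $\varphi_{n+1}^2$ and $r$ alone. Substituting the formula for $\varphi_{n+1}^2$ from the previous step then removes every occurrence of the $\varphi$'s and yields a second-order equation for $w_n$; reorganising it into the form of \eqref{eqPIII} and reading off the coefficients should give $\a=-2n$ and $\b=2n+2$.

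The only real obstacle is the algebraic bookkeeping in this final elimination, not any conceptual point. Two cancellations deserve advance attention. The $(w_n')^2/w_n$ term of \PIII\ arises solely from the square of $\varphi_{n+1}^2$ (the term $Q^2$ with $Q=\varphi_{n+1}^2$), so it must be followed carefully through the differentiation. The terms linear in $w_n$ that carry a factor $1/r^2$ enter from three distinct places and must cancel identically; writing $A=(2n+1)/r$, their total coefficient is proportional to $(2n+1)\big[(2n+1)+1-(2n+2)\big]=0$, so the cancellation is exact. Once these are verified, the $w_n^2$, $w_n'$ and constant coefficients match immediately, and the remaining $w_n^3$ and $-1/w_n$ terms appear unchanged.
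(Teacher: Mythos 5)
Your derivation is correct --- I checked the elimination and every cancellation you flag goes through: the quotient rule applied to $w_n=\ph_{n+1}/\ph_n$ together with the shifted pair of relations from \eqref{eq2ab} gives exactly your Riccati identity $w_n'=1+w_n^2-\tfrac{2n+1}{r}w_n-2\ph_{n+1}^2$; differentiating once more, the $(w_n')^2/w_n$ term of \PIII\ indeed comes solely from the $Q^2$ contribution with $Q=\ph_{n+1}^2$; the terms linear in $w_n$ carrying $1/r^2$ cancel because $(2n+1)\big[(2n+1)+1-(2n+2)\big]=0$; and the surviving coefficients assemble to $\a=-2n$, $\b=2n+2$, $\ga=1$, $\de=-1$ as claimed. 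The paper, by contrast, offers no derivation at all: its ``proof'' is a citation to Hisakado and to Tracy--Widom, where the result arises in the context of unitary matrix models. So your argument is a genuinely self-contained alternative, and a natural one, since it exploits precisely the differential-difference ladder \eqref{eq2ab} that the paper introduces alongside the complex sine-Gordon example; what it buys is an elementary, purely algebraic proof requiring nothing beyond first-order relations. One point you should make explicit: the theorem's hypothesis is only that $\ph_n$ satisfies the second-order equation \eqref{eq:csg2}, and $w_n=\ph_{n+1}/\ph_n$ only makes sense once $\ph_{n+1}$ is specified, whereas your elimination needs \emph{both} members of the shifted pair to hold for $(\ph_n,\ph_{n+1})$. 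The clean fix is to define $\ph_{n+1}$ by solving \eqref{eq2b} (with $n\to n+1$) for $\ph_{n+1}$ in terms of $\ph_n$ and $\ph_n'$, and then verify, using \eqref{eq:csg2} to eliminate $\ph_n''$, that \eqref{eq2a} with $n\to n+1$ is automatically satisfied --- a short check that closes the logical gap between the stated hypothesis and the input to your computation.
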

\begin{proof} See Hisakado \cite{refHisakado} and Tracy and Widom \cite{refTW99}. \end{proof}

\begin{Example}In their study of third-order ordinary differential equations, Mu\u{g}an and Jrad \cite{refMF04} show that the equations
\begin{gather} y^{2}\deriv[3]{y}{x}=4y \deriv{y}{x}\deriv[2]{y}{x} -3 \left( \deriv{y}{x}\right)^{3}+ y^4
\deriv{y}{x} +4\k\mu x \left( y \deriv{y}{x} -{\k y^3}\right) -4\k\mu y^2+3\mu \deriv{y}{x},\label{eq:MJ267}\\
y\deriv[3]{y}{x}=2 \deriv{y}{x}\deriv[2]{y}{x} -2y^2\deriv[2]{y}{x} + y^3\deriv{y}{x}+ y^5
+\k \left(2\deriv{y}{x}+xy^3 + y^2\right),\label{eq:MJ34}\\
y\deriv[3]{y}{x}=\deriv{y}{x}\deriv[2]{y}{x} -2y^3 + \k y^2-\frac{\k^2}{12}\left(x\deriv{y}{x}-y\right),\label{eq:MJ36}
\end{gather}
where $\k$ and $\mu$ are non-zero constants, have the Painlev\'e property. In \cite{refMF04} see equation (2.67) with $k_1=3\mu$, $k_2=0$, without loss of generality, and $k_3=4\k\mu$; equation (2.106) with $k_2=\k$ and $k_3=0$, without loss of generality; and equation~(4.14) with $k_1=\k$ and $k_2=0$, without loss of generality\footnote{The sign of the last term in \eqref{eq:MJ36} has been changed as there is a sign error in \cite[equation (4.14)]{refMF04}.}. Levi, Sekera and Winternitz \cite{refLSW} show that \eqref{eq:MJ267}, \eqref{eq:MJ36} and \eqref{eq:MJ36} have no symmetries and state that these equations are ``candidates for new Painlev\'e transcendents"; see equations (3.3), (3.4) and (3.4) in \cite{refLSW}. However, we show below, equation \eqref{eq:MJ267} can be solved in terms of \PIV\ \eqref{eqPIV} and equation \eqref{eq:MJ34} and \eqref{eq:MJ36} in terms of \Ptf\ \eqref{eq:P34}.

Letting $\ds y=\frac{1}{u}\deriv{u}{x}$ in equation \eqref{eq:MJ267} gives the tri-linear equation
\begin{gather*} \left( \deriv{u}{x} \right) ^{2}\deriv[4]{u}{x} -4\deriv{u}{x} \deriv[2]{u}{x} \deriv[3]{u}{x} +3 \left( \deriv[2]{u}{x} \right) ^{3}+ \left( 4\k\mu x u\deriv{u}{x} +3\mu u^{2} \right)\deriv[2]{u}{x}\\
\qquad{} +{4\k\mu (\k +1)x}\left( \deriv{u}{x} \right) ^{3} -\mu (4\k +3)u \left( \deriv{u}{x} \right) ^{2}=0, \end{gather*}
which has the first integral, the bi-linear equation
\begin{gather*}\left( \deriv{u}{x} \right) \deriv[3]{u}{x} = \frac32 \left( \deriv[2]{u}{x} \right) ^{2} -\left(2{\k}^{2}\mu {x}^{2}+K \right)\left( \deriv{u}{x} \right)^{2}
-4 \k\mu x u\deriv{u}{x} -\frac{3}{2}\mu u^2,\end{gather*}
 with $K$ a constant of integration. Since $\ds\deriv{u}{x}=uy$, then we obtain the second-order ordinary differential equation
\begin{gather*}\deriv[2]{y}{x}=\frac{3}{2y} \left(\deriv{y}{x} \right)^{2}+\frac{1}{2}y^3 -\left({2{\k}^{2}\mu {x}^{2}}-K\right)y -4\k\mu x-{\frac {3\mu }{2y}},
\end{gather*}
which is the first integral of \eqref{eq:MJ267} and is not one of the 50 equations in Ince's list. However, making the transformation
\begin{gather*} y(x)=\frac{\mu^{1/4}}{\k^{1/2} w(z)},\qquad z=\k^{1/2}\mu^{1/4} x,\end{gather*}
yields \PIV\ \eqref{eqPIV} with parameters
\begin{gather*} \a=\frac{K}{\k\mu^{1/2}},\qquad \b=-\frac{1}{2\k^2}.\end{gather*}

Letting $\ds y=\frac{1}{u}\deriv{u}{x}$ in equation \eqref{eq:MJ34} gives the tri-linear equation
\begin{gather*} u\deriv{u}{x}\deriv[4]{u}{x}=\left[u\deriv[2]{u}{x}+\left(\deriv{u}{x}\right)^2\right]\deriv[3]{u}{x}+2\k u\deriv[2]{u}{x}+\k x\left(\deriv{u}{x}\right)^3-\k u\left(\deriv{u}{x}\right)^2,\end{gather*}
which has the first integral
\begin{gather} \deriv[3]{u}{x}+(\k x-3C_1u)\deriv{u}{x}+2\k u=0,\label{eq:P343}\end{gather}
with $C_1$ a constant of integration. Letting $u=v+\k x/C_1$ gives
\begin{gather*}\deriv[3]{v}{x}-3C_1v\deriv{v}{x}=2\k x\deriv{v}{x}+\k v.\end{gather*}
Multiplying this by $v$ and integrating gives
\begin{gather*} v\deriv[2]{v}{x}=\frac12\left(\deriv{v}{x}\right)^2+C_1 v^3+\k xv^2+C_2,\end{gather*}
with $C_2$ a constant of integration, which is equivalent to \Ptf\ \eqref{eq:P34}, by rescaling the variables if necessary. Due to the relationship between \Ptf\ \eqref{eq:P34} and \PII\ \eqref{eqPII} \cite{refFA82}, solutions of equation~\eqref{eq:MJ34} can be expressed in terms of solutions of~\PII~\eqref{eqPII}. Specifically if~$w(z)$ is a solution of~\PII~\eqref{eqPII}, then
\begin{gather*} y(x) = -\k^{1/3}\left\{2w(z) + \frac{zw(z)+\a}{w'(z)+w^2(z)}\right\},\qquad x=-z/\k^{1/3},\end{gather*}
where $'\equiv \d/\d z$ satisfies \eqref{eq:MJ34}.

Letting $\ds y=\deriv{u}{x}$ in equation \eqref{eq:MJ36} and integrating gives the third-order equation
\begin{gather*} \deriv[3]{u}{x}+(24u-\k x)\deriv{u}{x}-\frac{\k^2 x}{12}=0,\end{gather*}
where the constant of integration has been set to zero, without loss of generality.
Then making the transformation
\begin{gather*}u=-\tfrac14v-\tfrac{1}{24}\k x,\end{gather*}
yields
\begin{gather} \deriv[3]{v}{x} = (6v+2\k x)\deriv{v}{x}+\k v. \label{eq:P363}\end{gather}
Multiplying this by $v$ and integrating gives
\begin{gather*} v\deriv[2]{v}{x}=\frac12\left(\deriv{v}{x}\right)^2+2 v^3+\k xv^2+C,\end{gather*}
with $C$ a constant of integration, which is equivalent to \Ptf\ \eqref{eq:P34}, by rescaling the variables if necessary.
\end{Example}

\begin{Remark}We remark that equations \eqref{eq:P343} and \eqref{eq:P363}, after rescaling the variables, arise as a scaling reduction of the Korteweg--de Vries equation~\cite{refFA82} and as a nonclassical reduction of the Boussinesq equation~\cite{refCK}.\end{Remark}

Bureau \cite{refBureau72} (see also \cite{refBGG72,refChalk87}) has also studied the classification of second order, second degree equations
\begin{gather}
\left(\deriv[2]{w}{z}\right)^2 =F\left(w,\deriv{w}{z},z\right)+G\left(w,\deriv{w}{z},z\right)\deriv[2]{w}{z}, \label{eq:gen2ode2}
\end{gather}
where $F$ and $G$ are rational in $w$ and $\ideriv{w}{z}$ and locally analytic in $z$. Cosgrove and Scoufis~\cite{refCS} have classified all equations with the Painlev\'e property for the special case of \eqref{eq:gen2ode2} when $G\equiv0$, i.e.,
\begin{gather*}
\left(\deriv[2]{w}{z}\right)^2 =F\left(w,\deriv{w}{z},z\right), 
\end{gather*}
where $F$ is rational in $w$ and $\ideriv{w}{z}$, locally analytic in $z$ and not a perfect square. Cosgrove and Scoufis \cite{refCS} solved the equations with the Painlev\'e property in terms of the Painlev\'e transcendents, elliptic functions, and solutions of linear equations, see also \cite{refCosgrove97,refCosgrove06b,refSM97,refSM98}. Cosgrove~\cite{refCosgrove93} classified all equations that are of Painlev\'e type of the form
\begin{gather*}
\left(\deriv[2]{w}{z}\right)^m =F\left(w,\deriv{w}{z},z\right),\qquad m\geq3, 
\end{gather*}
where $F$ is rational in $w$ and $\ideriv{w}{z}$ and locally analytic in $z$ and solved the equations in terms of the first, second and fourth Painlev\'e transcendents, elliptic functions, or quadratures.

For various results on classifying classes of second-order ordinary differential equations, including Painlev\'e equations, see Babich and Bordag \cite{refBB99}, Bagderina \cite{refBag07,refBag13,refBag15b,refBag15a,refBag16}, Bagderina and Tarkhanov \cite{refBagTark}, Berth and Czichowski \cite{refBC01}, Hietarinta and Dryuma \cite{refHD}, Kamran, Lamb and Shadwick \cite{refKLS}, Kartak \cite{refKartak11,refKartak12,refKartak13,refKartak14}, Kossovskiy and Zaitsev \cite{refKZ}, Milson and Valiquette~\cite{refMV}, Valiquette~\cite{refVal} and Yumaguzhin \cite{refYum}.
Most of these studies are concerned with the invariance of second-order ordinary differential equations of the form
\begin{gather*}\deriv[2]{w}{z} = F_3(w,z)\left(\deriv{w}{z}\right)^3+F_2(w,z)\left(\deriv{w}{z}\right)^2+F_1(w,z)\deriv{w}{z}+F_0(w,z),\end{gather*}
under the point transformations of the form
\begin{gather*} w=\psi(y,x),\qquad z= \phi(y,x),\qquad \frac{\partial(\psi,\phi)}{\partial(y,x)}\equiv\pderiv{\psi}{y}\pderiv{\phi}{x}-\pderiv{\psi}{x}\pderiv{\phi}{y}\not=0.\end{gather*}

Chazy \cite{refChazy09,refChazy11}, Garnier \cite{refGarnier12} and Bureau \cite{refBureau64} have obtained partial results on
the classification of ordinary differential equations with the Painlev\'e property for third-order equations of the form
\begin{gather} \label{eq:gen3ode} \deriv[3]{w}{z}=F\left(w,\deriv{w}{z},\deriv[2]{w}{z},z\right), \end{gather}
where $F$ is rational in $w$ and its derivatives, and locally analytic in $z$. Despite the considerable length of these papers, only a very small proportion of the possible equations with the Painlev\'e property in each class were discovered. Further no new transcendents were discovered, i.e., every equation with the Painlev\'e property was shown to be solvable in terms of previously known equations, either Painlev\'e transcendents, elliptic functions or quadratures.

Most of the recent studies of Painlev\'e classification for third-order equations have concentrated on equations in the \textit{Bureau polynomial class} where the function $F$ in \eqref{eq:gen3ode} is polynomial in $w$ and its derivatives, rather than rational. Cosgrove \cite{refCosgrove00a,refCosgrove06a} classified third-order equations of this specific form with the Painlev\'e property and solved the equations in terms of the Painlev\'e transcendents, elliptic functions, solutions of linear equations or quadratures; see also \cite{refBag08,refCosgrove00b}.

\begin{Problem}Given an ordinary differential equation with the Painlev\'e property, how do we know whether it can be solved in terms of a Painlev\'e transcendent?
\end{Problem}

\section{Notation for Painlev\'e transcendents}
Uniquely amongst the functions discussed in the DLMF \cite{refDLMF}, there is no special notation for the Painlev\'e transcendents, i.e., the solutions of the Painlev\'e equations. There are several functions in the DLMF whose notation involves $P$, or a variant, e.g., $P_n^{(\a,\b)}(z)$ (Jacobi polynomials), $P_n(z)$ (Legendre polynomials), and $\wp(z)$ (Weierstrass elliptic functions). For linear equations, there are a finite number of linearly independent solutions, e.g., $\Ai(z)$ and $\Bi(z)$ for the Airy equation
\begin{gather*}\deriv[2]{w}{z}-zw=0.\end{gather*}
However, for nonlinear equations such as the Painlev\'e equations, the issue of notation is not as simple as there are numerous completely different solutions. Although second-order equations, there don't exist two ``representative solutions". What is needed is some agreed notation for the Painlev\'e transcendents. In fact, unlike the linear case when the set of all solutions is a finite dimensional vector space, the set of all solutions of a Painlev\'e equation form a transcendental structure (a foliation travelling through a fibre bundle, each fibre of which is described by an affine Dynkin diagram) without any global coordinates which could be used as natural universal markers of the solutions. Such a notation would assist in the classification of properties of Painlev\'e equations.

For example, there are several different types of solutions of \PII\ \eqref{eqPII}.
\begin{enumerate}[(i)]\itemsep=0pt
\item The general solution of \PII\ is a transcendental function for \textit{all} values of $\a$ and involves {two} arbitrary constants.
\item Suppose that $w_k(z)$ is the solution of \PII\ with $\a=0$, i.e.,
\begin{gather*} \deriv[2]{w_k}{z}=2w_k^3+zw_k,
\end{gather*}
with the asymptotic behaviour
\begin{gather} w_k(z)\sim k\Ai(z),\qquad\text{as}\quad z\to\infty,\label{P2a}\end{gather}
where $k$ is a real parameter and $\Ai(z)$ is the Airy function, which uniquely determines the solution. This family of solutions has different analytical properties on the real axis and have different asymptotic behaviours as $z\to-\infty$, depending on the parameter $k$.
\begin{itemize}\itemsep=0pt
\item If $|k|<1$, then $w_k(z)$ is the \textit{Ablowitz--Segur solution} \cite{refAS77,refSA}, which is pole-free on the real axis
and as $z\to-\infty$ has oscillatory behaviour with algebraic decay given by
\begin{gather} w_k(z)= d|z|^{-1/4}\sin\left(\tfrac23|z|^{3/2} - \tfrac34 d^2\ln|z| -\theta_0\right)+o(|z|^{-1/4}),
\label{P2b}\end{gather}
where the connection formulae $d^2(k)$ and $\theta_0(k)$, which relate the asymptotic behaviours \eqref{P2a} and \eqref{P2b} as $z\to\pm\infty$, are
\begin{gather*}d^2(k) = - \pi^{-1}\ln\big(1-k^2\big),\\
\theta_0(k) = \tfrac32d^2 \ln 2 + \arg\left\{\Gamma\left(1-\tfrac12\i d^2\right)\right\} + \tfrac14\pi[1-2\sgn(k)],
\end{gather*}
see \cite{refBCLM,refCMcL,refDZ95}.
\item If $k=\pm1$ then $w_k(z)$ is the \textit{Hastings--McLeod solution} \cite{refHMcL} which is monotonic, pole-free on the real axis and has algebraic growth as $z\to-\infty$ given by
\begin{gather} w_{\pm1}(z)= \pm\left( \tfrac12 |z|\right)^{1/2}+o\big(|z|^{1/2}\big). \label{P2c}\end{gather}
\item If $|k|>1$ then $w_k(z)$ is a singular solution which has infinitely many poles on the negative real axis~-- see the numerical plot by Fornberg and Weideman \cite[Fig.~12]{refFW14}~-- and has singular oscillatory behaviour as $z\to-\infty$ given by
\begin{gather} w_k(z)= \frac{\sqrt{|z|}}{\sin\big\{\tfrac23|z|^{3/2}+\beta\ln\big(8|z|^{3/2}\big)+\phi\big\}+\O\big(|z|^{-3/2}\big)}+\O\big(|z|^{-1}\big),\label{P2d}\end{gather}
where $z$ bounded away from the singularities appearing in the denominator and the connection formulae
$\beta(k)$ and $\phi(k)$ are
\begin{gather*} \b(k)=\tfrac{1}{2}\pi^{-1}\ln\big(k^2-1\big),\qquad \phi(k) =-\arg\big[\Gamma\big(\tfrac12\i\b\big)\big]+\tfrac12\pi[\sgn(k)-1],\end{gather*}
see \cite{refBothIts,refKap}.
\item Bothner \cite{refBoth17} discusses the transition from the Ablowitz--Segur solution \eqref{P2b} and the singular solution \eqref{P2d} to the Hastings--McLeod solution \eqref{P2c} as $z\to-\infty$ and $|k|\to1$. The transition asymptotics are expressed in terms of the Jacobi elliptic functions.
\item The case when $k=\i\k$, with $\k\in\R$, in the boundary condition \eqref{P2a}, known as the \textit{pure imaginary Ablowitz--Segur solution}, is discussed by Its and Kapaev \cite{refIK03}.
\item Bogatskiy, Claeys and Its \cite{refBCI} extended these results to discuss \textit{complex Ablowitz--Segur solutions} in the case when $k\in\C$.
\end{itemize}
\item For \PII\ with $\a\not=0$, there are analogs of the Ablowitz--Segur and Hastings--McLeod solutions, known as the \textit{quasi-Ablowitz--Segur solution} and the \textit{quasi-Hastings--McLeod solution} \cite{refCKV,refDH17,refDH18}; see also \cite{refCDKSV,refFW14,refForrW15,refTroy}.
There is an extensive literature regarding the asymptotics for \PII\ \eqref{eqPII} when $\a=0$. There are significantly fewer asymptotic results in the case when $\a\not=0$. Further, whilst the Ablowitz--Segur and Hastings--McLeod solutions have exponential decay as $z\to\infty$ given by \eqref{P2a}, when $\a\not=0$ then the solutions only have algebraic decay given by
\begin{gather*} w(z;\a) \sim -\a/z,\qquad\text{as}\quad z\to\infty.\end{gather*}
For the quasi-Ablowitz--Segur solution when $\alpha\in\big({-}\tfrac12,\tfrac12\big)$, there exists a one-parameter family of real solutions~$w(z)$ for $k\in(-\cos(\pi\alpha),\cos(\pi\alpha))$ with the following properties:
\begin{gather*}
w(z)=B(z;\alpha)+k\Ai (z)\big[1 +\mathcal{O}\big(z^{-3/4}\big)\big], \qquad\text{as}\quad z\to\infty,
\end{gather*}
and
\begin{gather*}
w(z)=d|z|^{-1/4} \cos\left(\tfrac23|z|^{3/2}-\tfrac34d^2\ln|z|+\phi\right)+\mathcal{O}\big(|z|^{-1}\big), \qquad\text{as}\quad z\to-\infty,
\end{gather*}
where $\Ai (z)$ is the Airy function and $B(z;\alpha)$ is given by
\begin{gather*}
B(z;\alpha)\sim\frac{\alpha}{z}\sum_{n=0}^\infty \frac{a_n}{z^{3n}},
\end{gather*}
with coefficients $a_n$ which are uniquely determined by the recurrence relation
\begin{gather*}
a_{n+1} = (3n +1)(3n + 2)a_n-2\alpha^2\sum_{j,k,\ell=0}^n a_ja_ka_{\ell},\qquad a_0=1.
\end{gather*}
The connection formulas are given by
\begin{gather*}
d(k) =\pi^{-1/2}\sqrt{-\ln\big(\cos^2(\pi\alpha)-k^2\big)},\\
\phi(k)=-\tfrac32d^2\ln 2 + \arg\Gamma\big(\tfrac12 \i d^2\big)-\tfrac14\pi-\arg(-\sin(\pi\alpha)-k\i),
\end{gather*}
see Dai and Hu \cite{refDH17,refDH18}. For the quasi-Hastings--McLeod solution, Claeys, Kuijlaars and Vanlessen \cite{refCKV} show that there exists a unique solution which is pole-free on the real axis with the asymptotic behaviours
\begin{alignat*}{3}
& w(z) \sim -\alpha/z,\qquad && z\to+\infty,& \\
& w(z) \sim \sqrt{\tfrac12|z|},\qquad && z\to-\infty;&
\end{alignat*}
see also \cite{refDH17,refDH18}.

\item Special function solutions of \PII\ arise if and only if $\a=n+\tfrac12$, with $n\in\mathbb{Z}$, which involve {one} arbitrary constant \cite{refGambier10}. These are expressed in terms of the $n\times n$ Wronskian determinant
\begin{gather*} \tau_n(z;\vth)=\det\left[ \deriv[j+k]{}{z}\ph(z;\vth)\right]_{j,k=0}^{n-1},\qquad n\geq 1,\end{gather*}
where
\begin{gather*} \ph(z;\vth)=\cos(\vth)\Ai(\zeta)+\sin(\vth)\Bi(\zeta),\qquad \zeta=-2^{-1/3}z,\end{gather*}
with $\Ai(\zeta)$ and $\Bi(\zeta)$ the Airy functions and $\vth$ an arbitrary constant; see also the recent studies \cite{refPAC16,refDeano18}.
\item Rational solutions of \PII\ exist if and only if $\a=n$, with $n\in\mathbb{Z}$, which involve {no} arbitrary constants \cite{refVor,refYab}. These solutions are expressed in terms of polynomials $Q_n(z)$ of degree $\tfrac12n(n+1)$, now known as the \textit{Yablonskii--Vorob'ev polynomials}, which are defined through the recurrence relation (a second-order, bilinear differential-difference equation)
\begin{gather*}
Q_{n+1}{Q_{n-1}}= zQ_{n}^2 -4\left[
Q_{n}\deriv[2]{Q_{n}}{z}-\left(\deriv{Q_{n}}{z}\right)^2\right],\end{gather*}
with $Q_{0}(z)=1$ and $Q_{1}(z)=z$.
Clarkson and Mansfield \cite{refCM03} investigated the locations of the roots of the Yablonskii--Vorob'ev polynomials in the complex plane and showed that these roots have a very regular, approximately triangular structure; the term ``approximate" is used since the patterns are not exact triangles as the roots lie on arcs rather than straight lines. Bertola and Bothner \cite{refBB15} and Buckingham and Miller \cite{refBM14,refBM15} have studied the Yablonskii--Vorob'ev polynomials $Q_n(z)$ in the limit as $n\to\infty$ and shown that the roots lie in a ``triangular region'' with elliptic sides which meet with interior angle $\tfrac25\pi$.
\item There exist {tronqu\'{e}e} and {tri-tronqu\'{e}e} solutions of \PII, which are pole-free in sectors of the complex plane \cite{refBou13,refBou14}; see also \cite{refBertola,refHXZ,refJK,refJM,refMiller,refNovok12}.
\end{enumerate}
\begin{Problem}
Develop a notation for the Painlev\'e transcendents which takes into account the wide variety of solutions the Painlev\'e equations have.
\end{Problem}
\section{Numerical solution of Painlev\'e equations}
Numerical analysis of the Painlev\'e equations presents novel challenges: in particular, in contrast to the classical special functions, where the linearity of the equations greatly simplifies the situation, each problem for the nonlinear Painlev\'e equations arises essentially anew. Ideally what is needed is reliable, easy to use software to compute numerically the solutions of the Painlev\'e equations. On the other hand, Painlev\'e transcendents, being solutions of \textit{integrable} nonlinear equations, have much global information available about them. The software should be in the form of a living document where new numerical problems can be addressed by a pool of experts as they arise, as well as providing access to existing software. At the technical level, how does one combine asymptotic information about the solutions obtained from the Riemann--Hilbert problem, together with efficient numerical codes in order to compute the solution $w(z)$ at finite values of $z$? A comprehensive analysis presents many challenges, conceptual, philosophical and technical.

Deift \cite{refDeift08} wrote:
\begin{quote}{\it Writing useful numerical software for such nonlinear equations $[$i.e., the Painlev\'e equations$]$ presents many challenges, conceptual, philosophical and technical. Without the help of linearity, it is not at all clear how to select a broad enough class of ``representative problems".
}\end{quote}

Numerical simulations of the Painlev\'e equations given in \cite{refPAC05review,refPAC16} were obtained using MAPLE using the \texttt{DEplot} command with option \texttt{method=dverk78}, which finds a numerical solution using a seventh-eighth order continuous Runge--Kutta method. This is relatively simple to use, gives plots of solutions quickly with accuracy better than the human eye can detect, and generally works fine for initial value problems.

Some recent numerical computations of Painlev\'e equations include: a pole field solver using Pad\'e approximations \cite{refFFW1,refFFW2,refFW11,refFW14,refFW15,refRF13,refRF14}; numerical Riemann--Hilbert problems \cite{refOlver11,refOlver12,refOT14a,refOT14b,refTO,refWB}; Fredholm determinants \cite{refBorn1,refBorn2}; Pad\'e approximations \cite{refNovok09,refNovok14,refYamada}; pole elimination \cite{refAY12a,refAY12b,refAY12c,refAY13c,refAY13a,refAY13b,refAY15};
a multidomain spectral method \cite{refKS}.

\begin{Problem}\quad
\begin{itemize} \itemsep=0pt
\item The Runge--Kutta method, and variants, are standard ODE solvers. Can we do better for integrable equations such as the Painlev\'e equations?
\item Painlev\'e equations are ``integrable" and solvable by the isomonodromy method through an associated Riemann--Hilbert problem. How can we use this in the development of software for studying the Painlev\'e equations numerically?
\item It is well known that there are {discrete Painlev\'e equations}, which are integrable discrete equations that tend to the associated Painlev\'e equations in an appropriate continuum limit. Should we use a ``integrable discretization" of the Painlev\'e equations?
\end{itemize}
\end{Problem}

\section{Classification of properties of Painlev\'e equations}
The Painlev\'e equations are known to have a cornucopia of properties such as: a Hamiltonian representation; exact solutions (rational solutions, algebraic solutions, solutions in terms of classical special solutions);
B\"acklund transformations (which relate two solutions of a Painlev\'e equation); associated isomonodromy problems (which are Lax pairs that express the Painlev\'e equation as the compatibility of two linear systems);
and asymptotic approximations in the complex plane, with associated connection formulae relating the asymptotics. For details see \cite{refPAC05review,refConte,refFIKN,refGLS,refIKSY,refKNY,refNoumi} and the references therein.
\begin{Problem}
A complete classification and a unifying structure for these properties is needed as the presently known results are rather fragmentary and non-systematic.
\end{Problem}

\subsection*{Acknowledgments}
I would like to thank Mark Ablowitz, Andrew Bassom, Chris Cosgrove, Alfredo Dea\~{n}o, Percy Deift, Marco Fasondini, Bengt Fornberg, David G\'omez-Ullate, Rod Halburd, Andrew Hone, Alexander Its, Kerstin Jordaan, Nalini Joshi, Erik Koelink, Martin Kruskal, Ana Loureiro, Elizabeth Mansfield, Marta Mazzocco, Bryce McLeod, Peter Miller, Walter Van Assche, and Andr\'e Weideman for their helpful comments and illuminating discussions.


\pdfbookmark[1]{References}{ref}
\LastPageEnding

\end{document}